\documentclass[a4paper,fleqn]{cas-dc}

\usepackage[numbers]{natbib}
\usepackage{tikz}
\usetikzlibrary{matrix,patterns,positioning,decorations,arrows,shapes,decorations.markings,calc,fit}
\usepackage{adjustbox}
\usepackage{geometry}
\usepackage{todonotes}
\usepackage{amsmath}
\usepackage{amsthm}
\usepackage{bm}
\usepackage{pgfplots}
\pgfplotsset{compat=1.18}
\usepackage{subfiles}
\usepackage{subcaption}

\newcommand{\CitetVR}[1]{Van Rossum et al.\ \cite{#1}}

\newtheorem{thm}{Theorem}

\newtheorem{prop}[thm]{Proposition}

\begin{document}

\let\WriteBookmarks\relax
\def\floatpagepagefraction{1}
\def\textpagefraction{.001}
\shorttitle{Enforcing TSP-Optimality in Fair Vehicle Routing}
\shortauthors{Bart van Rossum, Rui Chen and Andrea Lodi}

\title [mode = title]{Enforcing TSP-Optimality in Fair Vehicle Routing by Cutting Planes}
\author[1]{Bart {van Rossum}}[orcid=0000-0002-8234-5373]
\address[1]{Operations, Planning, Accounting \& Control, Eindhoven University of Technology, Eindhoven, The Netherlands}
\corref{cor1}
\author[2]{Rui Chen}[orcid=0000-0002-8848-6118]
\address[2]{School of Data Science, The Chinese University of Hong Kong, Shenzhen, Guangdong, China}
\author[3]{Andrea Lodi}[orcid=0000-0001-9269-633X]
\address[3]{Cornell Tech, New York City, NY, USA}

\cortext[cor1]{Corresponding author: b.t.c.v.rossum@tue.nl}

\begin{abstract}
\noindent 
We study the fair capacitated vehicle routing problem, in which a fleet of vehicles must serve a set of customers such that the difference between the longest and shortest route, the range, is minimized. A key challenge is that the range objective is non-monotonic: it can be reduced by artificially lengthening routes, leading to solutions that violate TSP-optimality of individual routes. Existing exact methods struggle to handle this efficiently. We propose a branch-price-and-cut framework that enforces TSP-optimality through TSP-optimality cuts, which forbid TSP-dominated arc sequences. We strengthen the cuts through a dedicated lifting procedure. Computational experiments on benchmark instances with up to 25 customers show the method solves nearly all instances to optimality, achieving an average gap of 0.27\% on the hardest configurations.
\end{abstract}

\begin{keywords}
Vehicle routing \\ Fairness \\ Traveling salesman problem \\ Cutting planes \\ Column generation
\end{keywords}

\maketitle

\section{Introduction}
\label{sec:intro}

The goal of vehicle routing is to assign customers to vehicle routes so as to minimize total routing distance. Minimizing distance, however, may lead to highly imbalanced workloads across drivers, which can be undesirable from a practical perspective. This has motivated a growing interest in \emph{fair vehicle routing}, where the goal is to find a set of routes that is balanced according to some fairness measure \citep{agius2026solution, bruinink2025beyond, ljubic2026fairvehicle, matl2018workload, van2024new, rossum2025efficient}. A natural and widely used measure of imbalance is the \emph{range}, defined as the difference between the longest and shortest route. Minimizing the range promotes an equitable distribution of workload among drivers, and serves as the fairness objective throughout this paper.

A particular challenge that arises when minimizing range over route distances is that the objective is \emph{non-monotonic} \citep{matl2018workload}: the range can sometimes be reduced by artificially lengthening a short route, for example by visiting customers in a suboptimal order. As a result, an optimal solution may include routes that are not the shortest tours over their visited customers, i.e., routes that violate \emph{TSP-optimality}. This issue is absent when minimizing routing distance, where the monotonicity of the objective guarantees that TSP-optimal routes are always preferred. Figure~\ref{fig:tsp_suboptimality} illustrates this phenomenon. While TSP-violating solutions are optimal for the range objective, they are practically undesirable: a driver following a TSP-violating route incurs unnecessary travel time. Enforcing TSP-optimality, i.e., requiring that each route visits its customers in a shortest-possible order, is therefore a natural and practically motivated constraint. 

Throughout this paper, we distinguish between the \emph{fair capacitated vehicle routing problem} (F-CVRP), which minimizes the range without restricting route structure, and the \emph{fair capacitated vehicle routing problem with TSP-optimality} (F-CVRP-TSP), which additionally requires each route to be a shortest tour over its visited customers.

\begin{figure*}[htb!]
\centering
\begin{subfigure}[t]{0.48\textwidth}
\centering
\begin{tikzpicture}[scale=1,
    midarrow/.style={
        decoration={markings, mark=at position 0.5 with {\arrow{stealth}}},
        postaction={decorate}
    }
]
\begin{axis}[
    axis lines = none,
    width = \textwidth,
    height = \textwidth,
    disabledatascaling,
    xmin=-50, xmax=950, ymin=-50, ymax=950,
    ]
\tikzset{every node/.style={draw=black, fill=black!15, shape=circle, scale=0.5}}
    \node[shape=rectangle, scale=2, fill=white] at (500, 80) (0) { };
    \node at (600, 400) (1) { };
    \node at (750, 600) (2) { };
    \node at (850, 350) (3) { };
    \node at (350, 250) (4) { };
    \node at (180, 370) (5) { };
    \node at (120, 570) (6) { };
    \node at (280, 710) (7) { };
    \draw[black, midarrow] (0) -- (1);
    \draw[black, midarrow] (1) -- (2);
    \draw[black, midarrow] (2) -- (3);
    \draw[black, midarrow] (3) -- (0);
    \draw[black, midarrow] (0) -- (4);
    \draw[black, midarrow] (4) -- (5);
    \draw[black, midarrow] (5) -- (6);
    \draw[black, midarrow] (6) -- (7);
    \draw[black, midarrow] (7) -- (0);
\end{axis}
\end{tikzpicture}
\caption{TSP-optimal solution with range 227.}
\end{subfigure}
\hspace{0.25cm}
\begin{subfigure}[t]{0.48\textwidth}
\centering
\begin{tikzpicture}[scale=1,
    midarrow/.style={
        decoration={markings, mark=at position 0.5 with {\arrow{stealth}}},
        postaction={decorate}
    }
]
\begin{axis}[
    axis lines = none,
    width = \textwidth,
    height = \textwidth,
    disabledatascaling,
    xmin=-50, xmax=950, ymin=-50, ymax=950,
    ]
\tikzset{every node/.style={draw=black, fill=black!15, shape=circle, scale=0.5}}
    \node[shape=rectangle, scale=2, fill=white] at (500, 80) (0) { };
    \node at (600, 400) (1) { };
    \node at (750, 600) (2) { };
    \node at (850, 350) (3) { };
    \node at (350, 250) (4) { };
    \node at (180, 370) (5) { };
    \node at (120, 570) (6) { };
    \node at (280, 710) (7) { };
    \draw[black, midarrow] (0) -- (1);
    \draw[black, midarrow] (1) -- (3);
    \draw[black, midarrow] (3) -- (2);
    \draw[black, midarrow] (2) -- (0);
    \draw[black, midarrow] (0) -- (4);
    \draw[black, midarrow] (4) -- (5);
    \draw[black, midarrow] (5) -- (6);
    \draw[black, midarrow] (6) -- (7);
    \draw[black, midarrow] (7) -- (0);
\end{axis}
\end{tikzpicture}
\caption{TSP-violating solution with range 87.}
\end{subfigure}
\caption{Instance with two vehicles where TSP-suboptimality reduces the range.}
\label{fig:tsp_suboptimality}
\end{figure*}
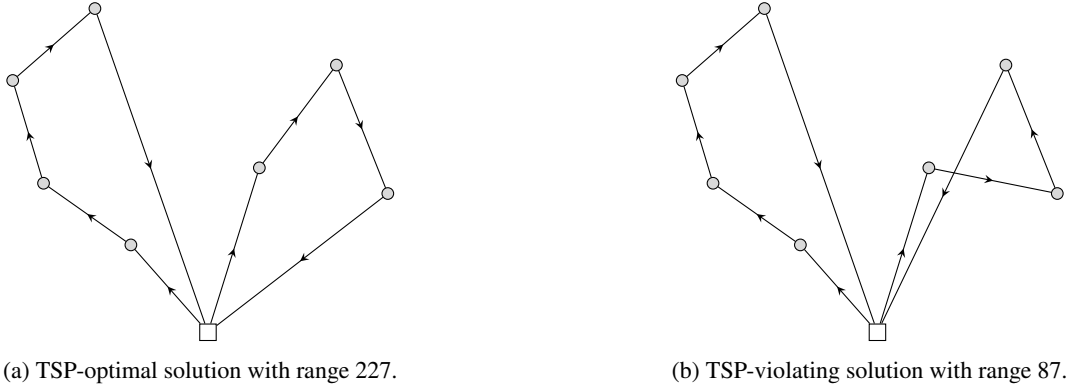

Exact methods for F-CVRP have been developed only recently. \CitetVR{rossum2025efficient} propose a branch-and-price algorithm for minimizing range that solves instances with up to $n = 25$ customers, but does not enforce TSP-optimality. They extend their framework to F-CVRP-TSP in two ways. The first is a postprocessing heuristic that converts each route in the final solution into its TSP-optimal counterpart. While computationally cheap, this heuristic approach can yield sizeable optimality gaps. The second extension enforces TSP-optimality directly in the pricing algorithm. This makes the pricing problem substantially harder and leads to a near-enumeration of feasible routes for larger instances, making it intractable in practice. \citet{ljubic2026fairvehicle} propose an exact bilevel optimization approach for the F-CVRP-TSP that extends to nonlinear fairness measures, such as the variance, but report optimality gaps around 80--90\% after one hour on instances with $n =25$ customers.

In this paper, we propose a branch-price-and-cut framework that enforces TSP-optimality efficiently through a new family of cutting planes, which we call \emph{TSP-optimality cuts}. These cuts, in the spirit of the path inequalities of \citet{ascheuer2000polyhedral} and \citet{kallehauge2007path}, forbid TSP-dominated arc sequences from appearing in selected routes. To the best of our knowledge, we are the first to use path inequalities to enforce TSP-optimality in a routing application, as compared to using them to enforce other route constraints. These cuts are \emph{robust} in the sense that they do not increase the complexity of the pricing problem. We propose an efficient breadth-first search separation procedure, and strengthen the cuts through a lifting procedure. Computational experiments on benchmark instances show that our method solves almost all instances to proven optimality, with an average optimality gap of only $0.27\%$ on the most difficult configurations with $n=25$. While we focus on the range objective, our cuts are compatible with any arc- or route-based formulation in which TSP-optimality must be enforced.

The remainder of this paper is structured as follows. We formally define the F-CVRP(-TSP) in Section~\ref{sec:problem}, and present the branch-price-and-cut framework of \CitetVR{rossum2025efficient} in Section~\ref{sec:branchPrice}. We introduce the TSP-optimality cuts and a separation and lifting procedure in Section~\ref{sec:cuts}. Finally, we analyze the effectiveness of the cuts through computational experiments in Section~\ref{sec:experiments}.

\section{Problem Description}
\label{sec:problem}

We consider vehicle routing on a complete directed graph $G = (V, A)$, where $V = N \cup \{0\}$ consists of a set of $N$ customers and a depot indexed by $0$. Each customer $i \in N$ has a positive integer demand $d_i \in \mathbb{N}_+$. A fleet of $K$ homogeneous vehicles is available, each with capacity $Q \geq\max_{i\in N}d_i$. The travel distance from node $i$ to node $j$, with $i, j \in N \cup \{0\}$, is given by $d_{ij}$. The total routing distance of all vehicles may not exceed a given budget $L$.

A \emph{route} is an elementary cycle that starts and ends at the depot and visits one or more customers, such that the total demand of visited customers does not exceed the vehicle capacity $Q$. The \emph{length} of a route equals the sum of arc distances along the route. A feasible solution consists of $K$ routes that cover every customer exactly once and whose total length does not exceed $L$.

The \emph{range} of a solution is defined as the difference between the length of the longest route and the length of the shortest route. The goal of the F-CVRP is to find a feasible solution that minimizes the range.

A route is \emph{TSP-optimal} if no other route visiting the same customers, starting and ending at the depot, has strictly smaller length, and \emph{TSP-violating} otherwise. F-CVRP-TSP is the restriction of F-CVRP in which all routes in a feasible solution are required to be TSP-optimal.

\section{Branch-Price-and-Cut Framework}
\label{sec:branchPrice}

We present a branch-price-and-cut framework for F-CVRP-TSP that, up to some minor improvements and the separation of rounded capacity inequalities, mirrors the approach of \CitetVR{rossum2025efficient}. The method operates in two stages. First, it solves F-CVRP using branch-price-and-cut. Second, a postprocessing step converts the obtained routes into a solution to F-CVRP-TSP using the Held-Karp dynamic program \citep{held1962dynamic}. This heuristic serves both as a benchmark for our exact approach and as the algorithmic foundation on which our method is built. In the remainder of this section, we present the mathematical formulation, describe the column generation procedure and pricing algorithm, outline the branching and cutting plane strategies, and detail the postprocessing step.

\subsection{Mathematical Formulation}
\label{subsec:formulation}

Let $R$ denote the set of all feasible routes. For each route $r \in R$, let $l_r$ denote its length, let $a_{ir} \in \{0,1\}$ indicate whether customer $i \in N$ is visited by route $r$, and let $b_{ir} \in \{0,1\}$ indicate whether customer $i$ is the \emph{last} customer visited before returning to the depot on route $r$. We introduce continuous variables $\eta$ and $\gamma$ representing the length of the longest and shortest route in the solution, respectively, and binary variables $x_r \in \{0,1\}$ indicating whether route $r$ is selected. The F-CVRP can then be formulated as
\begin{subequations}
\begin{align}
\min \quad & \eta - \gamma \label{eq:objective} \\
\text{s.t.} \quad  & \sum_{r \in R} a_{ir} x_{r}  = 1 && \forall i \in N \label{eq:partition} \\
& \sum_{r \in R} l_r x_{r} \leq L \label{eq:budget} \\
& \sum_{r \in R} x_{r} = K \label{eq:vehicles} \\
& \sum_{r \in R} l_r b_{ir} x_r \leq \eta && \forall i \in N \label{eq:fcvrp_customer_max} \\
& M \bigl(1 - \sum_{r \in R} b_{ir} x_r \bigr) \nonumber \\
& + \sum_{r \in R} l_r b_{ir} x_r \geq \gamma && \forall i \in N \label{eq:fcvrp_customer_min} \\
& x_{r} \in \{0, 1\} && \forall r \in R. \label{eq:domain}
\end{align}
\label{eq:formulation}
\end{subequations}
The objective~\eqref{eq:objective} is to minimize the range. Constraint~\eqref{eq:partition} ensures every customer is visited exactly once. Constraint~\eqref{eq:budget} enforces the total budget, and~\eqref{eq:vehicles} requires exactly $K$ routes to be used. Constraints~\eqref{eq:fcvrp_customer_max} and~\eqref{eq:fcvrp_customer_min} link route lengths to the upper and lower bound variables $\eta$ and $\gamma$, respectively. The big-$M$ term in~\eqref{eq:fcvrp_customer_min} deactivates the constraint for customers that are not the last on any route. This formulation typically yields tighter linear-programming (LP) bounds than a vehicle-indexed formulation.

Formulation~\eqref{eq:formulation} can be converted into a valid F-CVRP-TSP formulation by restricting $R$ to $R_{TSP} \subseteq R$, the subset of TSP-optimal routes. It is computationally hard to directly enforce this restriction in the pricing problem: \CitetVR{rossum2025efficient} show that this quickly becomes an intractable near-enumeration of feasible routes, and instead propose a postprocessing heuristic that converts the optimal F-CVRP solution into a feasible F-CVRP-TSP solution. In Section~\ref{sec:cuts}, we pursue a different direction: instead of modifying the pricing problem, we add cutting planes to the master problem that eliminate TSP-violating routes, yielding an exact method that preserves the tractability of the pricing algorithm.

\subsection{Column Generation}
\label{subsec:colgen}

Since the number of feasible routes $|R|$ is exponential in the number of customers, the LP relaxation of~\eqref{eq:formulation} is solved using column generation. We initialize a restricted master problem (RMP) with a subset of routes and iteratively add routes with negative reduced costs by solving a \emph{pricing problem}. The algorithm terminates once no route with negative reduced cost exists, certifying LP optimality.

Let $\mu_i \in \mathbb{R}$, $\lambda \geq 0$, $\sigma \in \mathbb{R}$, $\alpha_i \geq 0$, and $\beta_i \geq 0$ denote the dual variables associated with constraints~\eqref{eq:partition},~\eqref{eq:budget},~\eqref{eq:vehicles},~\eqref{eq:fcvrp_customer_max}, and~\eqref{eq:fcvrp_customer_min}, respectively. Fixing the last customer $i \in N$, the reduced cost of a route $r$ with last customer $i$ reads as
\begin{align}
    \tilde{c}_r = (\lambda + \alpha_i - \beta_i)\, l_r + M \beta_i - \sum_{j \in N} a_{jr} \mu_j - \sigma,
    \label{eq:reduced_cost}
\end{align}
where we use the fact that $b_{ir} = 1$ and $b_{jr} = 0$ for all $j \neq i$. Since $l_r$ is a sum of arc distances and $\sum_{j \in N} a_{jr} \mu_j$ decomposes over visited customers, the reduced cost~\eqref{eq:reduced_cost} decomposes over the arcs of the route. Consequently, by initializing one pricing problem per last customer $i \in N$, finding a route with negative reduced cost reduces to an elementary resource-constrained shortest path problem.

The pricing problem is solved on a load-expanded graph, in which each customer node is duplicated for every possible accumulated demand value. The resulting graph is directed and acyclic, which makes pricing considerably more efficient than on the original cyclic graph $G$. The pricing problem is solved using a bidirectional labeling algorithm combined with $ng$-route relaxation \citep{costa2019exact}.

\subsection{Branching and Cutting}
\label{subsec:branching}

When the column generation algorithm terminates with a fractional solution, we apply branching to recover integrality. We use a range branching strategy that branches on the values of $\eta$ and $\gamma$, imposing lower and upper bounds on route lengths and forbidding routes that violate these bounds in the pricing problem. Completion bounds on route distances, precomputed once, are used to prune infeasible partial routes during pricing. Additional branching is performed on last-customer assignments, by forbidding the corresponding pricing problem, and on individual arcs, by excluding their use in the pricing graph.

To strengthen the LP relaxation, we separate cutting planes at each node of the branch-and-bound tree. We add rounded capacity inequalities, separated using the heuristic separation routine of \citet{lysgaard2004new}. To obtain improved upper bounds early in the search, we invoke a primal heuristic every ten branch-and-bound nodes. A restricted master heuristic (RMH) solves the integer program~\eqref{eq:formulation} restricted to the set of columns currently present in the RMP, subject to a time limit of five seconds.

\subsection{Postprocessing Heuristic}
\label{subsec:postprocess}

Since the solutions obtained using the branch-price-and-cut framework are not necessarily TSP-optimal, \CitetVR{rossum2025efficient} propose a postprocessing step where they use the well-known Held-Karp dynamic program \citep{held1962dynamic} to convert all TSP-violating routes into their TSP-optimal counterparts. This yields a feasible solution to the F-CVRP-TSP. The lower bound returned by the branch-price-and-cut algorithm for the F-CVRP remains valid for the F-CVRP-TSP, since the latter problem is a restriction of the former. 

\section{Enforcing TSP-Optimality}
\label{sec:cuts}

Our core idea is to augment the branch-price-and-cut framework of Section~\ref{sec:branchPrice} with a new family of cutting planes that explicitly forbid TSP-violating routes. The cuts eliminate the need for a postprocessing step and yield a fully exact solution approach to the F-CVRP-TSP. We introduce the TSP-optimality cuts in Section~\ref{subsec:cuts}, and present an efficient separation algorithm in Section~\ref{subsec:separation}. We describe a lifting procedure in Section~\ref{subsec:lifting}, and discuss a minor modification to the primal heuristic in Section~\ref{subsec:primal}.

\subsection{TSP-Optimality Cuts}
\label{subsec:cuts}

We call a path $P = (v_1, \ldots, v_p)$ in $G$ \emph{TSP-violating} if there exists a path visiting the same nodes, starting at $v_1$ and ending at $v_p$, with strictly smaller total distance. Without loss of validity, we only consider paths $P$ that do not contain the depot as an intermediate node, i.e., for which
\begin{equation}
    v_2, \ldots, v_{p-1} \neq 0. \label{aspt:depot}
\end{equation}
Let $A(P) = \{(v_1, v_2), \ldots, (v_{p-1}, v_p)\} \subset A$ denote the set of arcs in $P$. If $P$ is TSP-violating, then no TSP-optimal route can contain all arcs in $A(P)$ simultaneously. This motivates the following cutting plane:
\begin{align}
    \sum_{a \in A(P)} x_a \leq |A(P)| - 1, 
    \label{eq:tsp_cut}
\end{align}
where, with a slight abuse of notation, we write $x_a = \sum_{r \in R: a \in r} x_r$ to denote the total flow over arc $a \in A$. Constraint~\eqref{eq:tsp_cut} stipulates that the arcs of a TSP-violating path $P$ cannot all appear together in selected routes. Clearly, any integer solution to~\eqref{eq:formulation} satisfying all such constraints consists exclusively of TSP-optimal routes.

These cuts are closely related to the path inequalities of \citet{ascheuer2000polyhedral}, originally introduced to strengthen the LP relaxation of arc-based formulations for the asymmetric TSP with time windows and later extended by \citet{kallehauge2007path} to the capacitated vehicle routing problem with time windows (CVRPTW). In those settings, the inequalities eliminate arc sequences rendered infeasible by time window or capacity constraints. We adopt the same principle, applying it instead to exclude arc sequences that are suboptimal from a TSP perspective.

A key property of the TSP-optimality cuts is that they are \emph{robust}: the dual variable of each cut~\eqref{eq:tsp_cut} decomposes naturally over arcs in the pricing graph. Consequently, adding TSP-optimality cuts to the master problem does not alter the structure of the pricing problem, and the labeling algorithm described in Section~\ref{subsec:colgen} can be directly applied.

\subsection{Separation}
\label{subsec:separation}

Since there are exponentially many TSP-optimality cuts, we separate them dynamically. For each arc $a \in A$, we compute the arc flow $x_{a}$ in the current fractional solution. We then identify violated cuts using a breadth-first search (BFS) over the graph induced by these flows.

The separation procedure works as follows. Starting from a source node, we extend a partial path $P = (v_1, \ldots, v_p)$ by appending a successor node $v_{p+1}$ as long as the total flow along $A(P)$ exceeds $|A(P)| - 1$, i.e., whenever the TSP-optimality cut \eqref{eq:tsp_cut} corresponding to $P$ could be violated. This condition already eliminates the vast majority of candidate paths. For each partial path of length $|P| \geq 3$, we invoke the Held-Karp dynamic program \citep{held1962dynamic} to compute the length of the shortest path visiting the same nodes as $P$, starting at $v_1$ and ending at $v_p$. If this shortest path is strictly shorter than $P$, then $P$ is TSP-violating and the corresponding cut~\eqref{eq:tsp_cut} is violated by the current solution; we add it to the RMP.

In principle, the BFS can be initiated from any node in $G$. In practice, we find that restricting separation to paths that start or end at the depot already returns the most violated cuts. The separation procedure is computationally cheap, allowing us to separate TSP-optimality cuts at both fractional and integer solutions, as is standard practice in branch-price-and-cut algorithms for vehicle routing \citep{costa2019exact}.

\subsection{Lifting}
\label{subsec:lifting}

Using a combinatorial lifting argument, \citet{mak2001asymmetric} proposes a strengthened version of path inequalities for the asymmetric traveling salesman problem in an arc-based formulation. \citet{kallehauge2007path} extend the idea to an arc-based formulation of CVRPTW. Similar techniques can be adapted in our setting to lift the TSP-optimality cuts \eqref{eq:tsp_cut}. The core idea is to identify additional arcs that, when combined with arcs of the path, would also lead to an infeasible partial route, e.g., by violating elementarity, TSP-optimality, or the vehicle capacity constraint. These arcs can then be appended to the left-hand side of the cut without changing its validity, yielding a stronger inequality. Throughout this section, we use the shorthand $x(A) = \sum_{a \in A} x_a$ for any arc set $A$.

Consider a TSP-violating path $P=(v_1,\ldots,v_p)$. For any partial path $(v_1, \ldots, v_h)$ of $P$, we define the following sets:
\begin{align*}
  &\Delta^{E,+}_{h}(P) &=~&  \big\{(v_h, v_j): 1 \leq j \leq h-1, v_j \neq 0 \big\}, \\[4pt]
  &\Delta^{TSP,+}_{h}(P) &=~& \big\{(v_h,j) : j\notin \{v_1,\ldots,v_{h+1}\} \text{ s.t.} \\
  &&& (v_1,\ldots,v_h,j)\text{ is TSP-violating} \big\}, \\[4pt]
  &\Delta^{Q,+}_{h}(P) &=~& \big\{(v_h,j) : j\notin \{v_1,\ldots,v_{h+1}\} \text{ s.t.} \\
  &&& \textstyle\sum_{i=1}^{h} d_{v_i} + d_j > Q \big\}.
\end{align*}
Observe that they correspond to the forward arcs that, when added to the partial path $(v_1, \ldots, v_h)$, render the resulting path infeasible due to a violation of elementarity, TSP-optimality, or the capacity constraint. Next, define $\Delta^{+}_{h}(P) = \Delta^{E, +}_{h}(P) \cup \Delta^{TSP,+}_{h}(P) \cup \Delta^{Q, +}_{h}(P)$. Then, the partial path $(v_1, \ldots, v_h)$ becomes infeasible when extended along any arc in $\Delta^{+}_{h}(P)$. The forward lifted TSP-optimality cuts read as
\begin{equation}
x(A(P))+ x\!\big(\bigcup_{h=1}^{p-1} \Delta^{+}_{h}(P)\big)\leq |A(P)|-1. \label{eq:lifted_cut}
\end{equation}
We provide a stand-alone proof of their validity, largely following the reasoning of \citet{kallehauge2007path}.
\begin{prop}
For a given TSP-violating path $P$, the forward-lifted cut~\eqref{eq:lifted_cut} is valid for F-CVRP-TSP.
\end{prop}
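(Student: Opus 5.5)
The plan is to check the inequality directly on an arbitrary integer solution of F-CVRP-TSP. Such a solution is a set of $K$ elementary, capacity-feasible, TSP-optimal routes that partition $N$. Every arc with at least one customer endpoint is used by at most one selected route, and at most once, so $x_a\in\{0,1\}$ for all such arcs. I will bound the left-hand side of~\eqref{eq:lifted_cut} node by node along $P$. I then show that attaining the bound $|A(P)|=p-1$ forces a selected route that is infeasible or TSP-violating.

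\emph{Step 1 (structure of the arc sets).} By~\eqref{aspt:depot}, the nodes $v_2,\ldots,v_{p-1}$ are customers. Only $v_1$ or $v_p$ can be the depot, and the tails $v_1,\ldots,v_{p-1}$ are pairwise distinct. Every arc in $\Delta^+_h(P)$ leaves $v_h$.
\begin{itemize}
\item An arc of $\Delta^{E,+}_h(P)$ enters some $v_j\neq 0$ with $j<h$, so it is not $(v_h,v_{h+1})$.
\item An arc of $\Delta^{TSP,+}_h(P)\cup\Delta^{Q,+}_h(P)$ enters $j\notin\{v_1,\ldots,v_{h+1}\}$, so it is not $(v_h,v_{h+1})$ either.
\end{itemize}
Hence the sets $\{(v_h,v_{h+1})\}\cup\Delta^+_h(P)$, for $h=1,\ldots,p-1$, are pairwise disjoint. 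The left-hand side therefore equals $\sum_{h=1}^{p-1}\bigl(x_{(v_h,v_{h+1})}+x(\Delta^+_h(P))\bigr)$.

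\emph{Step 2 (one unit per tail).} Fix $h$. If $v_h$ is a customer, its total outflow is exactly $1$, so the $h$-th term is at most $1$.

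If $v_h=0$, then $h=1$ and $\Delta^+_1(P)$ is empty:
\begin{itemize}
\item $\Delta^{E,+}_1(P)=\emptyset$ trivially;
\item a two-node path $(0,j)$ is never TSP-violating;
\item $d_0+d_j\le Q$, taking $d_0=0$ and using $Q\ge\max_i d_i$.
\end{itemize}
The remaining term $x_{(0,v_2)}$ is at most $1$ because $v_2$ is a customer. So every term is at most $1$, and the left-hand side is at most $p-1$. The cut can only fail if every term equals $1$, i.e., for every $h$ exactly one arc of $\{(v_h,v_{h+1})\}\cup\Delta^+_h(P)$ is used.

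\emph{Step 3 (equality is impossible).} Suppose every term equals $1$. Let $k$ be the smallest index whose used arc lies in $\Delta^+_k(P)$, or $k=p$ if there is none.

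By minimality, the arcs $(v_1,v_2),\ldots,(v_{k-1},v_k)$ are all used. Since the intermediate nodes are customers with in- and outflow $1$, these arcs chain together into a single selected route $r$ that traverses $(v_1,\ldots,v_k)$ consecutively.

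If $k=p$, then $r$ contains the whole TSP-violating path $P$. Replace this subpath by a strictly shorter path with the same endpoints and node set. The result visits the same customers, is still elementary, and starts and ends at the depot, so $r$ is TSP-violating, a contradiction.

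If $k<p$, then $r$ leaves $v_k$ along some arc $(v_k,j)\in\Delta^+_k(P)$, and there are three cases:
\begin{itemize}
\item $\Delta^{E,+}_k(P)$: $r$ revisits the customer $j$, contradicting elementarity.
\item $\Delta^{TSP,+}_k(P)$: $r$ contains the TSP-violating subpath $(v_1,\ldots,v_k,j)$, and the same replacement argument as for $k=p$ shows $r$ is TSP-violating.
\item $\Delta^{Q,+}_k(P)$: $r$ visits $v_1,\ldots,v_k,j$ with total demand above $Q$, violating capacity.
\end{itemize}
Each case contradicts feasibility, so the left-hand side is at most $p-2$ whenever equality would be needed, and the cut is valid.

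The main obstacle I expect is the bookkeeping around the depot. Its outflow can be $K$ rather than $1$, so Step 2 needs the argument that $\Delta^+_1(P)=\emptyset$ when $v_1=0$, and Step 1 needs the disjointness check. The replacement argument in Step 3 also needs care: substituting a subpath must keep the route elementary with the same customer set. This holds because the replacement visits exactly the same nodes and keeps the same endpoints.
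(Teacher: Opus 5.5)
Your proof is correct and follows essentially the same argument as the paper. You rewrite the lifted cut as a sum of per-tail terms $x_{v_h,v_{h+1}} + x(\Delta^{+}_{h}(P))$, bound each term by one (customer degree constraints, with $h=1$ handled separately), and show that reaching $|A(P)|$ forces one route to traverse $P$'s arcs consecutively; your minimal-index argument is the paper's induction, and your explicit disjointness check and subpath-replacement step just make precise what the paper leaves implicit.
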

\begin{proof}
Assume to the contrary that there exists a feasible solution $x$ violating~\eqref{eq:lifted_cut}, i.e., for which the left-hand side equals at least $|A(P)|$. We prove by induction that $x_{v_h, v_{h+1}} = 1$ for $h = 1, \ldots, p-1$, which implies $x(A(P)) = |A(P)|$, contradicting TSP-optimality of $x$.

Observe that the cut can be rewritten as
\begin{equation}
\sum_{h=1}^{p-1} \big( x_{v_h, v_{h+1}} + x(\Delta^{+}_{h}(P)) \big) \leq |A(P)| - 1. \label{eq:lifted_cut_rewritten}
\end{equation}
We note three facts. First, by \eqref{aspt:depot}, the node $v_h$ is a customer node for all $h = 2, \ldots, p-1$. By the degree constraint, it follows that $x_{v_h, v_{h+1}} + x(\Delta^{+}_{h}(P)) \leq 1$ for all such $h$. Second, $\Delta^{E,+}_{1}(P) = \Delta^{TSP,+}_{1}(P) = \emptyset$, since the partial path $(v_1)$ consists of a single node. Third, $x_a=0$ for $a\in \Delta^{Q,+}_{1}(P)$ by feasibility of $x$ and the definition of $\Delta^{Q,+}_{1}(P)$.

Hence, each of the $p-1$ terms in~\eqref{eq:lifted_cut_rewritten} contributes at most $1$, and the first term contributes at most $x_{v_1, v_2}$. For the left-hand side to reach $|A(P)| = p - 1$, it must hold that $x_{v_1, v_2} = 1$ and $x_{v_h, v_{h+1}} + x(\Delta^{+}_{h}(P)) = 1$ for all $h = 2, \ldots, p-1$.

We now apply induction on $h$. The base case $h=1$ requires $x_{v_1, v_2} = 1$, as already argued above.

For the induction step, suppose $x_{v_h, v_{h+1}} = 1$ for all $h = 1, \ldots, k-1$ with $2 \leq k \leq p-1$. We aim to show that $x_{v_k, v_{k+1}} = 1$. As the cut is violated, we have $x_{v_k, v_{k+1}} + x(\Delta^{+}_{k}(P)) = 1$. Suppose $x_{v_k, v_{k+1}} = 0$. Then $x(\Delta^{+}_{k}(P)) = 1$, meaning there exists an arc $(v_k, j) \in \Delta^{+}_{k}(P)$ with $x_{v_k, j} = 1$. By the induction hypothesis, the partial path $(v_1, \ldots, v_k)$ is traversed by $x$. However, by construction of $\Delta^{+}_{k}(P)$, extending this path with arc $(v_k, j)$ violates elementarity, TSP-optimality, or the capacity constraint. In each case, this contradicts feasibility of $x$. Hence, $x_{v_k, v_{k+1}} = 1$.

By induction, we conclude that path $P$ is fully traversed, contradicting TSP-optimality of $x$.
\end{proof}

\begin{figure}
\centering
\begin{tikzpicture}[scale=1,
    midarrow/.style={
        decoration={markings, mark=at position 0.5 with {\arrow{stealth}}},
        postaction={decorate}
    }
]
\begin{axis}[
    axis lines = none,
    disabledatascaling,
    xmin=130, xmax=900, ymin=-50, ymax=950,
    ]

\tikzset{every node/.style={draw=black, fill=black!15, shape=circle, scale=0.5}}

\node[shape=rectangle, scale=2, fill=white] at (500, 80) (0) { };

\node at (220, 700) (v1) { };
\node at (780, 200) (v2) { };
\node at (780, 700) (v3) { };
\node at (500, 850) (v4) { };

\draw[black, thick, midarrow] (0) -- (v1);
\draw[black, thick, midarrow] (v1) -- (v2);
\draw[black, thick, midarrow] (v2) -- (v3);

\draw[thick, dashed, midarrow, bend left=20]  (v2) to (v1);
\draw[thick, dashed, midarrow]                (v3) -- (v1);
\draw[thick, dashed, midarrow, bend left=20]  (v3) to (v2);

\draw[black, thick, dotted, midarrow] (v2) -- (v4);

\node[draw=none, fill=none, scale=2, font=\small] at (160, 700) {$v_1$};
\node[draw=none, fill=none, scale=2, font=\small] at (840, 200) {$v_2$};
\node[draw=none, fill=none, scale=2, font=\small] at (840, 700) {$v_3$};
\node[draw=none, fill=none, scale=2, font=\small] at (500, 910) {$v_4$};

\end{axis}
\end{tikzpicture}
\caption{Forward lifting of a TSP-optimality cut. Solid arrows show the TSP-violating path $P = (0, v_1, v_2, v_3)$, which is longer than the path $(0, v_2, v_1, v_3)$. Dashed arrows are arcs lifted via route elementarity: they cannot coexist with all arcs of $P$ in an elementary route. The dotted arrow $(v_2, v_4)$ is lifted via TSP-optimality: the path $(0, v_1, v_2, v_4)$ is longer than $(0, v_2, v_1, v_4)$.}
\label{fig:lifting}
\end{figure}
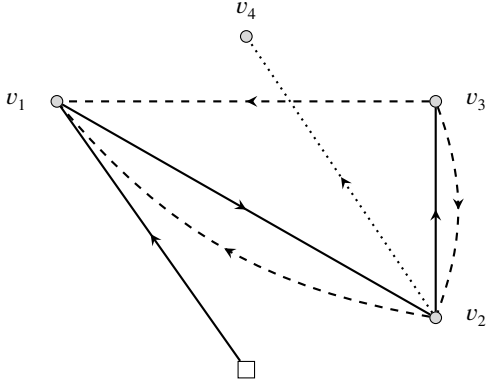

Figure~\ref{fig:lifting} illustrates the forward lifting procedure. Starting from the TSP-violating path $P=(0, v_1, v_2, v_3)$, the associated TSP-optimality cut~\eqref{eq:tsp_cut} reads
\begin{equation*}
x_{0, v_1} + x_{v_1, v_2} + x_{v_2, v_3} \leq 2.
\end{equation*}
Forward lifting based on elementarity and TSP-optimality yields the significantly stronger cut
\begin{equation*}
x_{0, v_1} + x_{v_1, v_2} + x_{v_2, v_1} + x_{v_2, v_3} + x_{v_2, v_4} + x_{v_3, v_1} + x_{v_3, v_2} \leq 2.
\end{equation*}

\paragraph{Backward lifting.}
We can apply an analogous lifting procedure by traversing path $P$ in the reverse direction. For each partial suffix $(v_h, \ldots, v_p)$, define the backward lifting sets
\begin{align*}
  &\Delta^{E,-}_{h}(P) &=~& \big\{(v_j, v_h) : h+1 \leq j \leq p, v_j \neq 0 \big\}, \\[4pt]
  &\Delta^{TSP,-}_{h}(P) &=~& \big\{(j, v_h) : j\notin \{v_{h-1},\ldots,v_p\} \text{ s.t.} \\
  &&& (j, v_h, \ldots, v_p)\text{ is TSP-violating} \big\}, \\[4pt]
  &\Delta^{Q,-}_{h}(P) &=~& \big\{(j, v_h) : j\notin \{v_{h-1},\ldots,v_p\} \text{ s.t.} \\
  &&& d_j + \textstyle\sum_{i=h}^{p} d_{v_i} > Q \big\}.
\end{align*}
Setting $\Delta^{-}_{h}(P) = \Delta^{E,-}_{h}(P) \cup \Delta^{TSP,-}_{h}(P) \cup \Delta^{Q,-}_{h}(P)$, the backward-lifted TSP-optimality cut reads
\begin{equation}
x(A(P)) + x\!\left(\bigcup_{h=2}^{p} \Delta^{-}_{h}(P)\right) \leq |A(P)| - 1. \label{eq:backward_lifted_cut}
\end{equation}
Its validity follows by an analogous induction argument traversing the path from $v_p$ back to $v_1$: if the cut is violated, then by the degree constraints and the definition of the backward lifting sets, the arcs of $P$ must all be traversed, contradicting TSP-optimality. In our implementation, we apply both forward and backward lifting to each cut.

\subsection{Primal Heuristic}
\label{subsec:primal}

A subtlety arises when applying the RMH in the context of the F-CVRP-TSP. Since the restricted master problem may contain TSP-violating routes, an integral solution returned by the RMH is not guaranteed to be feasible for the F-CVRP-TSP, even if all TSP-optimality cuts generated so far are satisfied. To address this, we convert all routes in the restricted master problem into their TSP-optimal counterparts before invoking the RMH. This ensures that any integral solution produced by the heuristic is feasible for the F-CVRP-TSP.

\section{Computational Experiments}
\label{sec:experiments}

We evaluate our method on the same instances as \CitetVR{rossum2025efficient} and \citet{ljubic2026fairvehicle}, containing $K=5$ vehicles, $n \in \{15, 20, 25\}$ customers, and using a route budget $L$ equal to 101, 105, or 110\% of the minimum total routing distance. We consider 20 different instances per configuration, yielding 180 instances in total. All algorithms are implemented in \texttt{Java}, our code is available upon request. All (integer) linear programs are solved using \texttt{CPLEX 22.1.0}. Experiments are run on computing cluster nodes equipped with 16\,GB of RAM and an AMD Rome 7H12 processor, with a time limit of one hour per instance.


\begin{table*}[htbp!]
  \centering
  \caption{%
    Average computational results per configuration.}
  \label{tab:results_tsp}
  \setlength{\tabcolsep}{4pt}
  \begin{tabular}{rrlrrrrrrr}
    \toprule
    $n$ & $L$ (\%) & Algorithm
        & LB & UB & Gap (\%) & Time (s) & Nodes & Cuts & Optimal \\
    \midrule
    \multirow{9}{*}{15} & \multirow{3}{*}{101} & Postprocessing & 1,765.25 & 1,765.40 & 0.01 & 4 & 250 & 37 & 19/20 \\
     &  & Cuts & 1,765.40 & 1,765.40 & 0 & 4 & 227 & 55 & 20/20 \\
     &  & Cuts + Lifting & 1,765.40 & 1,765.40 & 0 & 4 & 222 & 60 & 20/20 \\
    \cmidrule{2-10}
     & \multirow{3}{*}{105} & Postprocessing & 1,417.60 & 1,427.05 & 1.16 & 7 & 304 & 52 & 15/20 \\
     &  & Cuts & 1,425.25 & 1,425.25 & 0 & 7 & 299 & 88 & 20/20 \\
     &  & Cuts + Lifting & 1,425.25 & 1,425.25 & 0 & 7 & 277 & 97 & 20/20 \\
    \cmidrule{2-10}
     & \multirow{3}{*}{110} & Postprocessing & 965.65 & 1,031.25 & 7.73 & 6 & 193 & 48 & 6/20 \\
     &  & Cuts & 999.75 & 999.75 & 0 & 16 & 406 & 106 & 20/20 \\
     &  & Cuts + Lifting & 999.75 & 999.75 & 0 & 11 & 276 & 105 & 20/20 \\
    \midrule
    \multirow{9}{*}{20} & \multirow{3}{*}{101} & Postprocessing & 1,506.50 & 1,508.90 & 0.36 & 25 & 275 & 52 & 17/20 \\
     &  & Cuts & 1,508.90 & 1,508.90 & 0 & 21 & 225 & 92 & 20/20 \\
     &  & Cuts + Lifting & 1,508.90 & 1,508.90 & 0 & 19 & 187 & 102 & 20/20 \\
    \cmidrule{2-10}
     & \multirow{3}{*}{105} & Postprocessing & 1,083.75 & 1,120.10 & 4.09 & 39 & 364 & 59 & 8/20 \\
     &  & Cuts & 1,102.30 & 1,102.30 & 0 & 54 & 471 & 160 & 20/20 \\
     &  & Cuts + Lifting & 1,102.30 & 1,102.30 & 0 & 56 & 386 & 170 & 20/20 \\
    \cmidrule{2-10}
     & \multirow{3}{*}{110} & Postprocessing & 768.65 & 842.70 & 13.63 & 41 & 360 & 58 & 3/20 \\
     &  & Cuts & 795.55 & 795.55 & 0 & 247 & 1,660 & 263 & 20/20 \\
     &  & Cuts + Lifting & 795.55 & 795.55 & 0 & 159 & 968 & 218 & 20/20 \\
    \midrule
    \multirow{9}{*}{25} & \multirow{3}{*}{101} & Postprocessing & 1,594.40 & 1,600.65 & 0.51 & 356 & 772 & 77 & 13/20 \\
     &  & Cuts & 1,600.25 & 1,600.25 & 0 & 333 & 713 & 166 & 20/20 \\
     &  & Cuts + Lifting & 1,600.25 & 1,600.25 & 0 & 299 & 627 & 193 & 20/20 \\
    \cmidrule{2-10}
     & \multirow{3}{*}{105} & Postprocessing & 1,151.85 & 1,184.95 & 3.96 & 271 & 698 & 81 & 6/20 \\
     &  & Cuts & 1,174.65 & 1,174.65 & 0 & 760 & 2,049 & 305 & 20/20 \\
     &  & Cuts + Lifting & 1,174.65 & 1,174.65 & 0 & 468 & 1,189 & 258 & 20/20 \\
    \cmidrule{2-10}
     & \multirow{3}{*}{110} & Postprocessing & 751.37 & 867.05 & 16.50 & 504 & 1,113 & 71 & 3/20 \\
     &  & Cuts & 771.45 & 774.70 & 0.67 & 1,616 & 3,838 & 500 & 15/20 \\
     &  & Cuts + Lifting & 773.10 & 774.80 & 0.27 & 1,310 & 2,668 & 377 & 16/20 \\
    \bottomrule
  \end{tabular}
\end{table*}

Table~\ref{tab:results_tsp} reports the lower bound (LB), upper bound (UB), optimality gap, computing time, number of branch-and-bound nodes, number of cuts added, and number of instances solved to optimality, averaged per configuration. Note that the number of cuts measures both rounded capacity inequalities and TSP-optimality cuts. We compare three methods, all building on the branch-price-and-cut framework of Section~\ref{sec:branchPrice}: the postprocessing heuristic of \CitetVR{rossum2025efficient} that converts routes to TSP-optimal ones after solving the F-CVRP (`Postprocessing'), the same framework augmented with TSP-optimality cuts (`Cuts'), and the same framework with lifted TSP-optimality cuts (`Cuts + Lifting').

The postprocessing heuristic performs well on small instances with tight budgets, as previously observed \citep{rossum2025efficient}. For $n = 15$ and $L = 101\%$, it achieves an average gap of $0.01\%$ and solves 19 out of 20 instances to optimality within four seconds. Its performance deteriorates as the budget increases: for $n = 25$ and $L = 110\%$, the average gap rises to $16.50\%$ and only 3 out of 20 instances are solved to optimality, suggesting that more lenient budgets allow greater TSP-violations in the underlying solution.

The exact algorithm with regular TSP-optimality cuts obtains a near-zero optimality gap on all but the hardest configuration ($n = 25$, $L = 110\%$), where it achieves an average gap of $0.67\%$ and solves 15 out of 20 instances within the time limit. Observe that these optimality gaps are two orders of magnitude below those of the bilevel optimization approach of \citet{ljubic2026fairvehicle}. Compared to the postprocessing heuristic, solution quality improves substantially in the higher-budget settings, though this comes at increased computation time. 

Lifting the TSP-optimality cuts is generally beneficial, reducing the number of branch-and-bound nodes, cuts added, and computation time relative to the non-lifted baseline across many configurations. For the largest instances ($n = 25$, $L = 110\%$), the average gap decreases from $0.67\%$ to $0.27\%$, the node count from $3{,}838$ to $2{,}668$, the number of cuts from $500$ to $377$, and computation time from $1{,}616$ to $1{,}310$ seconds, while the number of instances solved to optimality increases from 15 to 16 out of 20. The reduction in cuts added is notable, as lifting produces fewer but stronger inequalities. We conclude that the proposed TSP-optimality cuts, and their lifted variants in particular, form an effective way of enforcing TSP-optimality in fair vehicle routing.

\bibliographystyle{cas-model2-names}
\bibliography{references}

@article{rossum2025efficient,
  title={Efficient branching rules for optimizing range and order-based objective functions},
  author={van Rossum, Bart and Chen, Rui and Lodi, Andrea},
  journal={Mathematical Programming},
  pages={1--34},
  year={2025},
  publisher={Springer}
}

@article{bruinink2025beyond,
title = {Beyond efficiency: Exploring the cost effects of prioritizing driver satisfaction in vehicle routing},
journal = {Operations Research Letters},
volume = {63},
pages = {107344},
year = {2025},
issn = {0167-6377},
author = {Tom Bruinink and Lotte Berghman and Twan Dollevoet},
}

@article{matl2018workload,
  title={Workload equity in vehicle routing problems: A survey and analysis},
  author={Matl, Piotr and Hartl, Richard F and Vidal, Thibaut},
  journal={Transportation Science},
  volume={52},
  number={2},
  pages={239--260},
  year={2018},
  publisher={INFORMS}
}

@article{lysgaard2004new,
  title={A new branch-and-cut algorithm for the capacitated vehicle routing problem},
  author={Lysgaard, Jens and Letchford, Adam N and Eglese, Richard W},
  journal={Mathematical programming},
  volume={100},
  number={2},
  pages={423--445},
  year={2004},
  publisher={Springer}
}

@article{ascheuer2000polyhedral,
  title={A polyhedral study of the asymmetric traveling salesman problem with time windows},
  author={Ascheuer, Norbert and Fischetti, Matteo and Gr{\"o}tschel, Martin},
  journal={Networks},
  volume={36},
  number={2},
  pages={69--79},
  year={2000},
  publisher={Wiley Online Library}
}

@misc{ljubic2026fairvehicle,
  author    = {Ljubi{\'c}, Ivana and Puerto, Justo and Torrejon, Alberto},
  title     = {Fair Vehicle Routing via Bilevel Optimization},
  year      = {2026},
  month     = {March},
  url       = {https://optimization-online.org/2026/03/fair-vehicle-routing-via-bilevel-optimization/},
  note      = {{P}reprint, submitted March 5, 2026}
}

@article{kallehauge2007path,
author = {Kallehauge, Brian and Boland, Natashia and Madsen, Oli B.G.},
title = {Path inequalities for the vehicle routing problem with time windows},
journal = {Networks},
volume = {49},
number = {4},
pages = {273-293},
year = {2007}
}

@article{agius2026solution,
  title={Solution algorithms for a vehicle routing problem with route-cost equity constraints},
  author={Agius, Maxime and Absi, Nabil and Feillet, Dominique and Garaix, Thierry},
  journal={Annals of Operations Research},
  pages={1--41},
  year={2026},
  publisher={Springer}
}

@inproceedings{van2024new,
  title={A new branching rule for range minimization problems},
  author={van Rossum, Bart and Chen, Rui and Lodi, Andrea},
  booktitle={International Conference on Integer Programming and Combinatorial Optimization},
  pages={433--445},
  year={2024},
  organization={Springer}
}

@article{costa2019exact,
  title={Exact branch-price-and-cut algorithms for vehicle routing},
  author={Costa, Luciano and Contardo, Claudio and Desaulniers, Guy},
  journal={Transportation Science},
  volume={53},
  number={4},
  pages={946--985},
  year={2019},
  publisher={INFORMS}
}

@article{held1962dynamic,
  title={A dynamic programming approach to sequencing problems},
  author={Held, Michael and Karp, Richard M},
  journal={Journal of the Society for Industrial and Applied Mathematics},
  volume={10},
  number={1},
  pages={196--210},
  year={1962},
  publisher={SIAM}
}

@phdthesis{mak2001asymmetric,
  title={On the Asymmetric Travelling Salesman Problem with Replenishment Arcs},
  author={Mak, Vicky H},
  year={2001},
  school={University of Melbourne, Department of Mathematics and Statistics}
}

\end{document}